\author{Eric Emtander\footnote{Department of Mathematics,
Stockholm University, 106 91 Stockholm, erice@math.su.se}}
\title{A class of hypergraphs that generalizes chordal graphs}
\begin{document}
\date{}
\maketitle \theoremstyle{plain}\newtheorem{thm}{Theorem}[section]
\theoremstyle{plain}\newtheorem{lemma}{Lemma}[section]
\theoremstyle{plain}\newtheorem{prop}{Proposition}[section]
\theoremstyle{plain}\newtheorem{cor}{Corollary}[section]
\theoremstyle{definition}\newtheorem{de}{Definition}[section]
\theoremstyle{remark}\newtheorem{rem}{Remark}[section]
\def\mapr#1{{\mathop{\rightarrow}\limits^{#1}}}
\let\sse=\subseteq \let\ssm=\smallsetminus\let\hra=\hookrightarrow
\let\car=\curvearrowright\let\ss=\subset\let\t=\textrm\let\mc=\mathcal
\abstract{In this paper we introduce a class of hypergraphs that we
call chordal. We also extend the definition of triangulated
hypergraphs, given in \cite{VT}, so that a triangulated hypergraph,
according to our definition, is a natural generalization of a
chordal (rigid circuit) graph. In \cite{F1}, Fr\"oberg shows that
the chordal graphs corresponds to graph algebras, $R/I(\mc{G})$,
with linear resolutions. We extend Fr\"oberg's method and show that
the hypergraph algebras of generalized chordal hypergraphs, a class 
of hypergraphs that includes the chordal hypergraphs, have linear
resolutions. The definitions we give, yield a natural higher 
dimensional version of the well known flag property of simplicial 
complexes. We obtain what we call $d$-flag complexes.}

\section{Introduction and preliminaries}

Let ${\mc X}$ be a finite set and ${\mc E}=\{E_1,\ldots,E_s\}$ a
finite collection of non empty subsets of ${\mc X}$. The pair ${\mc{
H=(X,E)}}$ is called a {\bf hypergraph}. The elements of ${\mc X}$
and ${\mc E}$, respectively, are called the {\bf vertices} and the
{\bf edges}, respectively, of the hypergraph. If we want to specify
what hypergraph we consider, we may write $\mc{X(H)}$ and
$\mc{E(H)}$ for the vertices and edges respectively. A hypergraph is
called {\bf simple} if: (1) $|E_i|\ge2$ for all $i=1,\ldots,s$ and
(2) $E_j\sse E_i$ implies $i=j$. If the cardinality of ${\mc X}$ is
$n$ we often just use the set $[n]=\{1,2,\ldots,n\}$ instead of
$\mc{X}$.

Let $\mc H$ be a hypergraph. A {\bf subhypergraph} $\mc K$ of $\mc
H$ is a hypergraph such that $\mc{X}(\mc{K})\sse\mc{X(H)}$, and
$\mc{E(K)}\sse\mc{E(H)}$. If $\mc Y\sse\mc{X}$, the {\bf induced
hypergraph on} $\mc Y$, $\mc{H}_{\mc{Y}}$, is the subhypergraph with
$\mc{X(H_Y)}=\mc{Y}$ and with $\mc{E(H_Y)}$ consisting of the edges
of $\mc{H}$ that lie entirely in $\mc{Y}$. A hypergraph $\mc H$ is
said to be {\bf $d$-uniform} if $|E_i|=d$ for every edge $E_i\in
\mc{E( H)}$. Note that a simple $2$-uniform hypergraph is just an
ordinary simple
graph.\\

Throughout the paper we denote by $R$ the polynomial ring
$k[x_1,\ldots,x_n]$ over some field $k$, where $n$ is the number of
vertices of a hypergraph considered at the moment. By identifying
each vertex $v_i\in\mc{X(H)}$ with a variable $x_i\in R$, we may
think of an edge $E_i$ of a hypergraph as a monomial
$x^{E_i}=\prod_{j\in E_i}x_j$ in $R$. Employing this idea, we may
associate to every simple hypergraph $\mc{H}$, a squarefree monomial
ideal in $R$. The {\bf edge ideal} $I(\mc H)$ of a hypergraph $\mc
H$ is the ideal
\[
I(\mc{H})=(x^{E_i}\,;\, E_i\in\mc{E(H)})\sse R,
\]
generated ``by the edges'' of ${\mc H}$. This yields the
{\bf hypergraph algebra} $R/I(\mc{H})$.\\

In this way we obtain a 1-1 correspondence
\[
\big\{\t{simple\,hypergraphs\,on}\,[n]\big\}\leftrightsquigarrow
\big\{\t{squarefree\,monomial\,ideals}\,I\sse
R=k[x_1,...,x_n]\big\}.
\]

Recall that an {\bf (abstract) simplicial complex} on vertex set
$[n]$ is a collection, $\Delta$, of subsets of $[n]$ with the
property that $G\sse F,\,F\in\Delta \Rightarrow G\in\Delta$. The
elements of $\Delta$ are called the {\bf faces} of the complex and
the maximal (under inclusion) faces are called {\bf facets}. The
{\bf dimension}, $\dim F$, of a face $F$ in $\Delta$, is defined to
be $|F|-1$, and the dimension of $\Delta$ is defined as
$\dim\Delta=\max\{\dim F ;\, F\in \Delta\}$. Note that the empty set
$\emptyset$ is the unique $-1$ dimensional face of every complex
that is not the void complex $\{\}$ which has no faces. The
dimension of the void complex may be defined as $-\infty$. The
$r$-skeleton of a simplicial complex $\Delta$, is the collection of
faces of $\Delta$ of dimension at most $r$. Let $V\sse [n]$. We
denote by $\Delta_V$ the simplicial complex
\[
\Delta_V=\{F\sse [n]\,;\,F\in\Delta, F\sse V\}.
\]
For convenience, we consider 0 to be a natural number, i.e.,
$\mathbb{N}=\{0,1,2,3,\ldots\}$. A vector ${\bf
j}=(j_1,\ldots,j_n)\in\{0,1\}^n$ is called a squarefree vector in
$\mathbb{N}^n$. We may identify $\bf{j}$ with the set $V\sse [n]$,
where $i\in V$ precisely when $j_i=1$. Since this correspondence
between the $V$ and the
$\bf{j}$ is bijective, we may also denote $\Delta_V$ by 
$\Delta_{{\bf j}}$.\\

Given a simplicial complex $\Delta$, we denote by $\mc{C}.(\Delta)$
its reduced chain complex, and by
{$\tilde{H}_n(\Delta;k)=Z_{n}(\Delta)/B_n(\Delta)$} its $n$'th
reduced homology group with coefficients in the field $k$. In
general we could use an arbitrary abelian group instead of $k$, but
we will only consider the case when the coefficients lie in a field.
For convenience, we define the homology of the void
complex to be zero.\\

Let $\Delta$ be an arbitrary simplicial complex on $[n]$. The {\bf
Alexander dual simplicial complex} $\Delta^\ast$ to $\Delta$, is
defined by
\[
\Delta^\ast=\{F\sse [n] ; [n]\ssm F \not\in\Delta\}.
\]
Note that $(\Delta^\ast)^\ast=\Delta$.\\

The edge ideal was first introduced by R.~H.~Villarreal \cite{Vi},
in the case when $\mc{H}=\mc{G}$ is a simple graph. After that,
hypergraph algebras has been widely studied. See for instance
\cite{E1, Fa1, VT, VT3, VT2, He1, Ja, Vi2, Zh}. In \cite{VT}, the
authors use certain connectedness properties to determine a class of
hypergraphs such that the hypergraph algebras have linear
resolutions. Furthermore, nice recursive formulas for computing the
Betti numbers are given.

Perhaps the most common way to study the connections between the
combinatorial information contained in a hypergraph, and the
algebraic information contained in the corresponding hypergraph
algebra, is the one given by the {\it Stanley-Reisner
correspondence}, which is a 1-1 correspondence:
\[
\big\{\t{simplicial\,complexes\,on}\,[n]\big\}\leftrightsquigarrow
\big\{\t{squarefree\,monomial\,ideals}\,I\sse
R=k[x_1,\ldots,x_n]\big\}
\]
\[
\Delta\leftrightsquigarrow I_\Delta.
\]
Here, a monomial $x^F$ is an element in $I_\Delta$ precisely when
$F$ is a non face in $\Delta$. Note that using the above two 1-1
correspondences, we also get a 1-1 correspondence between the class
of simple hypergraphs on $[n]$, and the class
of simplicial complexes on $[n]$.\\

Let ${\mc H}=([n],\mc{E(H)})$ be a simple hypergraph and consider
its edge ideal $I(\mc H)\sse R$. Note that $R/I(\mc H)$ is precisely
the Stanley-Reisner ring of the simplicial complex
\[
\Delta(\mc H)=\{F\sse [n] ; E\not\sse F,\, \forall E\in \mc{E(H)}\}.
\]
This is called the {\bf independence complex} of $\mc H$. The edges
in $\mc H$ are precisely the minimal non faces of $\Delta(\mc H)$.

Thus, we may think of the edges of a simple hypergraph as the
minimal non faces of a simplicial complex or, equally well, the
relations in the $k$-algebra $R/I(\mc{H})$. The connections between
a (hyper)graph and its independence complex are explored in, for
example \cite{E1, F1, Ja}.

Another way to use hypergraphs to investigate the properties of
simplicial complexes was introduced in \cite{Fa1} by S.~Faridi.
Given a simplicial complex $\Delta$, denote by $\{F_1,\ldots,F_t\}$
the set of facets of $\Delta$. Faridi then defines another
squarefree monomial ideal, the {\bf facet ideal of $\Delta$},
\[
\mc{F}(\Delta)=(x^F\,;\,F\,\t{is\,a\,facet\,of}\,\Delta).
\]
In several papers, for example \cite{Fa1, Fa2, Zh}, properties of
simplicial complexes are studied via the combinatorial properties of
their facet ideals. Note that the
set of facets of $\Delta$ is a simple hypergraph.\\

In section 4, we introduce the classes of chordal and triangulated
hypergraphs. The definition of triangulated hypergraph is almost
identical to Definition 5.5 in \cite{VT}, however, ours is more
general. These classes of hypergraphs illustrates that $d$-uniform 
hypergraphs behaves much like ordinary simple graphs. However, there 
are familiar properties of graphs that do not translate immediately 
to $d$-uniform hypergraphs. See for instance Remark 4.1 and Example 1.

It is well known, see \cite{FG}, that chordal graphs are
characterized by the fact that they have perfect elimination orders.
We show that this remain true for hypergraphs.

In Theorem 4.1 we show that the properties of being triangulated,
chordal, and having
a perfect elimination order, are equivalent also for hypergraphs.\\

In section 5 we introduce the class of generalized chordal
hypergraphs, which includes the chordal hypergraphs, and show that
the corresponding hypergraph algebras, $R/I(\mc{H})$, have linear
resolutions. Our method of proof is a natural generalization of one
used by R.~Fr\"oberg in \cite{F1}. There, Fr\"oberg characterizes,
in terms of the complementary graphs $\mc{G}^c$, precisely for what
graphs $\mc{G}$ the graph algebras $R/I(\mc{G})$ have linear
resolutions. Fr\"oberg shows:

\begin{thm} Let $G$ be a simple graph on $n$ vertices. Then
$k[x_1,\ldots,x_n]/I(\mc{G})$ has linear resolution precisely when
$\mc{G}^c$ is chordal (rigid circuit, triangulated,\ldots).
\end{thm}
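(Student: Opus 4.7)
The plan is to pass through the Stanley-Reisner correspondence and use Hochster's formula to translate the linear resolution condition into a statement about reduced homology of induced subcomplexes of the independence complex $\Delta(\mc{G})$. The crucial observation is that $\Delta(\mc{G})$ is precisely the flag (clique) complex of $\mc{G}^c$: a face of $\Delta(\mc{G})$ is an independent set of $\mc{G}$, equivalently a clique of $\mc{G}^c$. Consequently $\Delta(\mc{G})_W$ is the flag complex of $(\mc{G}^c)_W$ for every $W\sse [n]$.

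First I would invoke Hochster's formula
\[
\beta_{i,j}(R/I(\mc{G})) \;=\; \sum_{W\sse [n],\,|W|=j}\dim_k\tilde{H}_{j-i-1}(\Delta(\mc{G})_W;k),
\]
and, since $I(\mc{G})$ is generated in degree $2$, read off that $R/I(\mc{G})$ has a linear resolution iff $\tilde{H}_r(\Delta(\mc{G})_W;k)=0$ for every $W\sse [n]$ and every $r\ge 1$; cases $r\le -1$ are automatic for nonempty $W$ and $r=0$ accounts only for the generators $\beta_{1,2}$. So the theorem reduces to showing: \emph{the flag complex of $(\mc{G}^c)_W$ has vanishing reduced homology in positive degrees for every $W\sse [n]$ iff $\mc{G}^c$ is chordal.}

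For $(\Leftarrow)$, I would argue by induction on $|V(\mc{G}^c)|$, relying on Dirac's classical fact that every nonempty chordal graph $H$ has a simplicial vertex $v$, i.e.\ one whose neighbourhood $N(v)$ is a clique. Writing $X$ for the flag complex of $H$, $Y$ for that of $H\ssm v$, $\sigma$ for the full simplex on $N(v)\cup\{v\}$ and $\tau$ for the full simplex on $N(v)$, one checks the decomposition $X=Y\cup\sigma$ with $Y\cap\sigma=\tau$. The essential combinatorial point is that every clique of $H$ containing $v$ is of the form $\{v\}\cup S$ with $S\sse N(v)$, and, because $v$ is simplicial, every subset of $N(v)$ is already a clique. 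Since $\sigma$ and $\tau$ are simplices, hence contractible, Mayer-Vietoris yields $\tilde{H}_r(X;k)\cong\tilde{H}_r(Y;k)$ for all $r\ge 1$. Chordality is hereditary, so $H\ssm v$ is chordal and the induction hypothesis gives the vanishing; running the same argument on every induced subgraph of $\mc{G}^c$ finishes this direction.

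For $(\Rightarrow)$, if $\mc{G}^c$ is not chordal then by definition it contains an induced cycle $C_m$ with $m\ge 4$. Since an induced cycle of length $\ge 4$ contains no triangle, its flag complex equals $C_m$ itself, which is homeomorphic to $S^1$ and so has $\tilde{H}_1\ne 0$. Taking $W$ to be the vertex set of this cycle contradicts the homological vanishing, producing a nonzero Betti number off the linear strand. The main technical obstacle is the combinatorial verification underlying the Mayer-Vietoris step, together with a careful treatment of boundary cases (for example the isolated-vertex case where $\tau$ degenerates to $\{\emptyset\}$); once these are dispatched, the induction runs smoothly.
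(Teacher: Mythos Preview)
The paper does not actually prove this theorem; it is quoted as Fr\"oberg's result from \cite{F1} and no argument is supplied. Your proof plan is correct in both directions.

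For comparison: your forward implication (chordal $\Rightarrow$ linear) is, up to phrasing, the $d=2$ specialization of the paper's proof of Theorem~5.1. Your decomposition at a simplicial vertex $v$, writing the flag complex as $X=Y\cup\sigma$ with $Y\cap\sigma=\tau$ and $\sigma,\tau$ simplices, is exactly the Mayer--Vietoris step the paper performs when expressing a chordal (hyper)graph as $\mc{G}\cup_{K_j^d}K_i^d$; for $d=2$ the attached $K_i^d$ is the simplex on $N[v]$ and the intersection $K_j^d$ is the simplex on $N(v)$. The only cosmetic difference is that the paper argues via its inductive construction of chordal hypergraphs, whereas you invoke Dirac's simplicial-vertex theorem directly; the paper's Theorem~4.1 shows these viewpoints coincide. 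Your converse (non-chordal $\Rightarrow$ non-linear via an induced $C_m$, $m\ge 4$) has no counterpart in the paper, which only establishes the forward direction in the hypergraph setting.
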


By Theorem 5.1, we obtain a partial generalization of Fr\"oberg's 
theorem.\\

Let $\Delta$ be an arbitrary simplicial complex, such that the
Stanley-Reisner ring $R/I_\Delta$ has linear resolution. Then we
know that the generators of $I_\Delta$ all have the same degree, $d$
say. Thus, we may think of $R/I_\Delta$ as a hypergraph algebra
$R/I(\mc{H})$ for some $d$-uniform hypergraph $\mc{H}$. However, we
will look at things in another way. The {\bf complementary
hypergraph $\mc{H}^c$}, of a $d$-uniform hypergraph $\mc{H}$, is
defined as the hypergraph on the same set of vertices as $\mc{H}$,
and edge set
\[
\mc{E(H}^c)=\{F\sse\mc{X(H)}\,;\,|F|=d,\,F\not\in\mc{E(H)}\}.
\]
The edges of $\mc{H}^c$ may, in a natural way, be thought of as the
$(d-1)$-dimensional faces in the independence complex
$\Delta(\mc{H})$, of $\mc{H}$. This is how Fr\"oberg looks at things
when he proves his theorem. We show that the complex
$\Delta(\mc{H})$ is completely determied by the edges in $\mc{H}^c$,
which gives us the notion of $d$-flag complexes.

\section{Resolutions and Betti numbers}

To every finitely generated graded module $M$ over the polynomial
ring $R=k[x_1,\ldots,x_n]$, we may associate a {\bf minimal
($\mathbb{N}$-)graded free resolution}
\[
0\to {\bigoplus}_j R(-j)^{\beta_{l,j}(M)}\to{\bigoplus}_j
R(-j)^{\beta_{{l-1},j}(M)} \to\cdots\to{\bigoplus}_j
R(-j)^{\beta_{0,j}(M)}\to M\to 0
\]
where $l\le n$ and $R(-j)$ is the $R$-module obtained by shifting
the degrees of $R$ by $j$. Thus, $R(-j)$ is the graded $R$-module in
which
the grade $i$ component $(R(-j))_i$ is $R_{i-j}$.\\
The natural number $\beta_{i,j}(M)$ is called the $ij$'th
$\mathbb{N}$-{\bf graded Betti number} of $M$. If $M$ is multigraded
we may equally well consider the $\mathbb{N}^n$-graded minimal free
resolution and Betti numbers of $M$. The difference lies just in the
fact that we now use multigraded shifts $R(-\bf j)$ instead of
$\mathbb{N}$-graded ones. The {\bf total} $i$'th Betti number is
$\beta_i(M)=\sum_j \beta_{i,j}$. For further details on resolutions,
graded rings and Betti numbers, we
refer the reader to \cite{BH}, sections 1.3 and 1.5.\\
\\
The Betti numbers of $M$ occur as the dimensions of certain vector
spaces over $k=R/m$, where $m$ is the unique maximal graded ideal in
$R$. Accordingly, the Betti numbers in general depend on the
characteristic of $k$.\\
A minimal free resolution of $M$ is said to be {\bf linear} if for
$i>0$, $\beta_{i,j}(M)=0$ whenever $j\neq i+d-1$ for some fixed
natural number $d\ge1$.

In connection to this we mention the {\it Eagon-Reiner theorem}.

\begin{thm} Let $\Delta$ be a simplicial complex and $\Delta^\ast$ its
Alexander dual complex. Then $R/I_\Delta$ is Cohen-Macaulay if and
only if $R/I_{\Delta^\ast}$ has linear minimal free resolution.
\end{thm}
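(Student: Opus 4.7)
The plan is to translate both sides of the theorem into statements about reduced homology, using Hochster's formula on one side and Reisner's criterion on the other, and then to connect the two via combinatorial Alexander duality.

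First, recall Hochster's formula: for any simplicial complex $\Gamma$ on $[n]$ and any $\sigma\sse[n]$,
\[
\beta_{i,\sigma}(R/I_\Gamma)=\dim_k\tilde{H}_{|\sigma|-i-1}(\Gamma_\sigma;k),
\]
so summation over $|\sigma|=j$ recovers the $\mathbb{N}$-graded Betti numbers. Apply this with $\Gamma=\Delta^\ast$. A linear resolution of $R/I_{\Delta^\ast}$ says that the minimal generators of $I_{\Delta^\ast}$ all lie in one degree $d$ and $\beta_{i,j}(R/I_{\Delta^\ast})=0$ for every $i\ge1$ and every $j\neq i+d-1$. Since the minimal nonfaces of $\Delta^\ast$ are precisely the complements of the facets of $\Delta$, the single-degree requirement forces $\Delta$ to be pure of dimension $n-d-1$.

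Next, I would set up the duality dictionary. A direct computation from the definitions shows, for $\tau=[n]\ssm\sigma$,
\[
((\Delta^\ast)_\sigma)^\ast=\t{link}_\Delta(\tau),
\]
where the left-hand Alexander dual is taken with $\sigma$ as the ambient vertex set. Combinatorial Alexander duality, valid with field coefficients, then yields
\[
\tilde{H}_{|\sigma|-i-1}((\Delta^\ast)_\sigma;k)\cong\tilde{H}_{i-2}(\t{link}_\Delta(\tau);k),
\]
and substituting this into Hochster's formula rewrites the Betti numbers of $R/I_{\Delta^\ast}$ as a sum of link-homologies of $\Delta$.

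To finish, I would match the two vanishing conditions. Writing $\ell=i-2$ and $|\tau|=n-j$, the linearity constraint $j\neq i+d-1$ turns into $\ell\neq n-d-1-|\tau|=\dim\t{link}_\Delta(\tau)$. Since reduced homology strictly above the top dimension of any complex vanishes automatically, the linear-resolution condition collapses to
\[
\tilde{H}_\ell(\t{link}_\Delta(\tau);k)=0\qquad\text{for every }\tau\sse[n]\text{ and every }\ell<\dim\t{link}_\Delta(\tau),
\]
which, together with purity of $\Delta$, is exactly Reisner's criterion for $R/I_\Delta$ to be Cohen-Macaulay. The main obstacle is the bookkeeping in this last step: one must keep strict track of the homological shifts produced by Hochster's formula and by Alexander duality and verify that the linear window $j=i+d-1$ singles out precisely the top-dimensional link-homology stratum that Reisner's criterion leaves unconstrained.
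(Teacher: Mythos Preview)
The paper does not actually prove this theorem; its entire proof is a one-line citation to the original Eagon--Reiner paper. Your outline, by contrast, supplies a genuine argument, and it is essentially the standard textbook proof: Hochster's formula on the $\Delta^\ast$ side, Reisner's criterion on the $\Delta$ side, and the identity $((\Delta^\ast)_\sigma)^\ast=\t{link}_\Delta([n]\ssm\sigma)$ together with combinatorial Alexander duality to bridge the two. The index bookkeeping you flag as the main obstacle is carried out correctly.

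One small point is worth making explicit for the converse direction. You deduce purity of $\Delta$ from the hypothesis that $I_{\Delta^\ast}$ is generated in a single degree, which is what you need for the implication ``linear $\Rightarrow$ Cohen--Macaulay''. For the implication ``Cohen--Macaulay $\Rightarrow$ linear'' you must first know that $I_{\Delta^\ast}$ is equigenerated before the phrase ``linear resolution'' even makes sense; this follows from the standard fact that a Cohen--Macaulay Stanley--Reisner ring has a pure underlying complex, which you should state rather than leave implicit.
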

\begin{proof}
See \cite{ER}, Theorem 3.
\end{proof}

\section{Hochster's formula and the Mayer-Vietoris sequence}

In topology one defines Betti numbers in a somewhat different
manner. {\it Hochster's formula} provides a link between these and
the Betti numbers defined above.

\begin{thm}{(Hochster's formula).} Let $R/I_\Delta$ be the Stanley-Reisner
ring of a simplicial complex $\Delta$. The non-zero Betti numbers of
$R/I_\Delta$ are only in squarefree degrees $\bf j$ and may be
expressed as
\[
\beta_{i,{\bf j}}(R/I_\Delta)=\dim_k\tilde{H}_{|{\bf
j}|-i-1}(\Delta_{\bf j};k).
\]
Hence the total $i$'th Betti number may be expressed as
\[
\beta_i(R/I_\Delta)=\sum_{V\sse[n]}\dim\tilde{H}_{|V|-i-1}(\Delta_V;k).
\]
\end{thm}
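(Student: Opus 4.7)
The plan is to use the Tor interpretation of Betti numbers: since $\beta_{i,\mathbf{j}}(R/I_\Delta)=\dim_k\operatorname{Tor}_i^R(R/I_\Delta,k)_{\mathbf{j}}$, and since the Koszul complex $K_\bullet=K_\bullet(x_1,\ldots,x_n;R)$ is a minimal $\mathbb{Z}^n$-graded free resolution of $k$, I would compute
\[
\operatorname{Tor}_i^R(R/I_\Delta,k) \;=\; H_i\bigl(K_\bullet\otimes_R R/I_\Delta\bigr)
\]
multigraded piece by multigraded piece, and recognize each piece as the reduced chain complex of an induced subcomplex of $\Delta$.

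First I would check that non-squarefree multidegrees contribute nothing. The module $K_i\otimes R/I_\Delta$ has, in multidegree $\mathbf{a}\in\mathbb{N}^n$, a $k$-basis of elements $x^{\mathbf{a}-\mathbf{1}_W}e_W$, where $W\subseteq\operatorname{supp}(\mathbf{a})$ has $|W|=i$ and $x^{\mathbf{a}-\mathbf{1}_W}\notin I_\Delta$. If some coordinate $a_\ell\geq 2$, then $\ell\in\operatorname{supp}(\mathbf{a}-\mathbf{1}_W)$ for every admissible $W$, and multiplication/division by $x_\ell$ yields a standard null-homotopy, so the homology in degree $\mathbf{a}$ vanishes. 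This matches the claim that nonzero Betti numbers occur only in squarefree degrees.

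Next, for a squarefree $\mathbf{j}$ with support $V\subseteq[n]$, the basis above becomes $\{x^{V\setminus W}e_W : W\subseteq V,\,|W|=i,\,V\setminus W\in\Delta\}$, which is in bijection with the $(|V|-i-1)$-dimensional faces of $\Delta_V=\Delta_{\mathbf{j}}$ via $F\longleftrightarrow V\setminus W$. Fixing a total order on $[n]$ and comparing the Koszul differential, which removes one element of $W$ with a variable factor, to the simplicial boundary $\partial F=\sum(-1)^k F\setminus\{v_k\}$, one sees that up to reindexing (position $i$ corresponds to dimension $|V|-i-1$) and a uniform sign, the graded component $(K_\bullet\otimes R/I_\Delta)_{\mathbf{j}}$ is isomorphic to the reduced chain complex $\mathcal{C}_\bullet(\Delta_V;k)$. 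Taking $H_i$ gives $\tilde{H}_{|V|-i-1}(\Delta_V;k)=\tilde{H}_{|\mathbf{j}|-i-1}(\Delta_{\mathbf{j}};k)$, which is the first formula. The second formula follows by summing the $\mathbb{N}^n$-graded Betti numbers over all $\mathbf{j}$ and using the bijection between squarefree vectors and subsets $V\subseteq[n]$, together with the convention that the void complex has zero homology.

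The main obstacle is bookkeeping: tracking the $\mathbb{Z}^n$-grading on the Koszul complex, matching the resulting homological index $i$ with the simplicial dimension $|V|-i-1$, and verifying that the signs of the Koszul differential agree (after a chosen ordering of $V$) with those of the simplicial boundary map, so that the identification is not merely an isomorphism of graded vector spaces but of chain complexes. Once this matching is in place, the conclusion is immediate.
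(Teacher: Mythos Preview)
Your argument is correct: this is the standard Koszul-complex proof of Hochster's formula, and the bookkeeping you flag (the shift $i\leftrightarrow |V|-i-1$ and the sign check) is exactly what needs to be verified. Note, however, that the paper does not give its own proof of this theorem at all; it simply cites \cite{BH}, Theorem 5.5.1. The argument you have sketched is essentially the one found in that reference (and in Miller--Sturmfels and elsewhere), so there is no genuine methodological difference to compare---you have supplied the content that the paper outsources to the literature.
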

\begin{proof}
See \cite{BH}, Theorem 5.5.1.
\end{proof}
If one has $\mathbb{N}^n$-graded Betti numbers, it is easy to obtain
the $\mathbb{N}$-graded ones via
\[
\beta_{i,j}(R/I_\Delta)=\sum_{\substack{ {\bf j'} \in {\mathbb{N}^n} \\
|{\bf j'}|=j}} \beta_{i,{\bf j'}}(R/I_\Delta).
\]
Thus,
\[
\beta_{i,j}(R/I_\Delta)=\sum_{\substack{V\sse [n]\\
|V|=j}}\dim\tilde{H}_{|V|-i-1}(\Delta_V;k).
\]

Recall that if we have an exact sequence of complexes,\footnote{That
is, complexes of modules over some ring $R$.}
\[
{\bf 0}\to{\bf L}\to{\bf M}\to{\bf N}\to{\bf 0}
\]
there is a long exact (reduced) homology sequence associated to it
\[
\cdots\to H_r(N)\to H_{r-1}(L)\to H_{r-1}(M)\to H_{r-1}(N)\to\cdots
.
\]
When we prove Theorem 5.1, we will use this homology sequence in the
special case
where it is associated to a simplicial complex as follows.\\

Suppose we have a simplicial complex $N$ and two subcomplexes $L$
and $M$, such that $N=L\cup M$. This gives us an exact sequence of
(reduced) chain complexes
\[
0\to\mc{C}.(L\cap M)\to\mc{C}.(L)\oplus\mc{C}.(M)\to\mc{C}.(N)\to0.
\]
The non trivial maps here are defined by $x\mapsto (x,-x)$ and
$(x,y)\mapsto x+y$.

The long exact (reduced) homology sequence associated to this
particular sequence, is called the Mayer-Vietoris sequence. More
about the Mayer-Vietoris sequence can be found in \cite{Ma}, section
4.4.

\section{The classes of chordal and triangulated hypergraphs}
In this section, all hypergraphs are assumed to be simple and
$d$-uniform.

\begin{de}
Two distinct vertices $x,y$ of a hypergraph $\mc{H}$ are {\bf
neighbours} if there is an edge $E\in\mc{E(H)}$, such that $x,y\in
E$. For any vertex $x\in\mc{X(H)}$, the {\bf neighbourhood of} $x$,
denoted $N(x)$, is the set
\[
N(x)=\{y\in\mc{X(H)}\,;\,y\,\mathrm{is\,a\,neighbour\,of}\,x\}.
\]
If $N(x)=\emptyset$, $x$ is called isolated. Furthermore, we let
$N[x]=N(x)\cup\{x\}$ denote the {\bf closed neighbourhood of} $x$.
\end{de}

\begin{rem}
Let $\mc{H}$ be a hypergraph and $V\sse\mc{X(H)}$. Denote by $N_V[x]$ 
the closed neigbourhood of $x$ in the induced hypergraph $\mc{H}_V$. 
For ordinary graphs it is clear that $N_V[x]=N[x]\cap V$. This is not 
always the case for hypergraphs, as is shown in the example below. 
Note that the notation $N_V[x]$ will only occur in this remark 
and the example below. The fact that we do not make any greater use of it, 
is intimately connected to, and in a sense illustrates, the properties of 
the hypergraphs that we are to consider.   
\end{rem}

{\bf Example 1.}\quad Consider the hypergraph $\mc{H}$ on vertex set 
$\mc{X(H)}=\{a,b,c,d,e\}$ and edge set 
$\mc{E(H)}=\{\{a,b,c\},\{a,d,e\}, \{b,c,d\}\}$. Let $V=\{a,b,c,d\}$. Then 
$N_V[a]=\{a,b,c\}$ but $N[a]\cap V=\{a,b,c,d\}$.\\

Recall the definition of the $d$-complete hypergraph:
\begin{de}
The $d$-complete hypergraph, $K_n^d$, on a set of $n$ vertices, is
defined by
\[
\mc{E}(K_n^d)={[n]\choose d}
\]
where $F\choose d$ denotes the set of all subsets of $F$, of
cardinality $d$. If $n<d$, we interpret $K_n^d$ as $n$ isolated
points.
\end{de}

If $\mc{H}$ is a hypergraph, we associate a simplicial complex
$\Delta_{\mc{H}}$ to it in the following way:

\begin{de} Given a hypergraph $\mc{H}=(\mc{X(H)},\mc{E(H)})$, the {\bf
complex of} $\mc{H}$, $\Delta_{\mc{H}}$, is the simplicial complex
\[
\Delta_{\mc{H}}=\{F\sse\mc{X(H)}\,;\,{F\choose d}\sse\mc{E(H)}\}
\]
Note that this implies that if $F\sse\mc{X(H)}$, $|F|<d$, then
$F\in\Delta_{\mc{H}}$.
\end{de}

\begin{rem}
$\Delta_{\mc{H}}$ is completely determined by $\mc{H}$. Indeed, all
faces of dimension at least $d-1$ clearly is determined by $\mc{H}$,
since each one correspond uniquely to a $d$-complete subhypergraph
of $\mc{H}$.
\end{rem}
\begin{rem}
Recall that a flag complex is a simplicial complex in which every
minimal non face consists of precisely 2 elements. As one easily
sees, such complex is determined by its 1-skeleton. According to the
previous remark, {\bf $d$-flag complexes}, i.e. complexes
$\Delta_{\mc{H}}$ whose minimal non faces all have cardinality $d$,
in a natural way generalizes flag complexes.
\end{rem}

\begin{prop}
$\Delta_{\mc{H}}=\Delta(\mc{H}^c)$, where $\Delta(\mc{H}^c)$ is the
independence complex of $\mc{H}^c$.
\end{prop}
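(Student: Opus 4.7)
The plan is to prove equality of two simplicial complexes by unwinding the definitions and showing that membership in each is governed by the same condition on $\binom{F}{d}$. This will be a short verification, so I would structure it as a double inclusion (or, equivalently, a chain of ``iff''s).

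First I would fix $F \subseteq \mc{X(H)}$ and spell out what it means for $F$ to belong to $\Delta(\mc{H}^c)$. By the definition of the independence complex recalled in the introduction,
\[
F \in \Delta(\mc{H}^c) \iff E \not\subseteq F \text{ for every } E \in \mc{E(H}^c).
\]
Since $\mc{H}^c$ is $d$-uniform with edge set consisting of exactly those $d$-subsets of $\mc{X(H)}$ not in $\mc{E(H)}$, an edge $E$ of $\mc{H}^c$ contained in $F$ is the same thing as a $d$-element subset of $F$ that fails to be in $\mc{E(H)}$. So the condition ``$E \not\subseteq F$ for every $E \in \mc{E(H}^c)$'' is equivalent to saying that no $d$-subset of $F$ lies outside $\mc{E(H)}$, i.e., $\binom{F}{d} \subseteq \mc{E(H)}$.

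But this is precisely the defining condition for $F \in \Delta_{\mc{H}}$. Thus $F \in \Delta(\mc{H}^c) \iff F \in \Delta_{\mc{H}}$ and the two complexes coincide. I would also note, for completeness, that when $|F| < d$ both conditions are vacuous (there are no $d$-subsets of $F$, and no edge of $\mc{H}^c$ can fit inside $F$), so the small faces are correctly accounted for on both sides.

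There is no real obstacle here; the statement is essentially a tautology once the definitions of $\mc{H}^c$, $\Delta(\mc{H}^c)$, and $\Delta_{\mc{H}}$ are laid side by side. The only thing to be a little careful about is making the quantifier translation ``no $d$-subset of $F$ is a non-edge'' $\Leftrightarrow$ ``every $d$-subset of $F$ is an edge'' explicit, and handling the $|F|<d$ case so the reader sees that the correspondence is not limited to large faces.
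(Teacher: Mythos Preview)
Your proof is correct and follows essentially the same approach as the paper: both verify that $F\in\Delta(\mc{H}^c)$ holds precisely when $\binom{F}{d}\subseteq\mc{E(H)}$, and both note separately that the condition is vacuous when $|F|<d$. Your version simply spells out the quantifier manipulation more explicitly than the paper's terse one-line argument.
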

\begin{proof}
The two complexes has the same set of vertices.
$F\in\Delta(\mc{H}^c)$ precisely when ${F\choose d}\sse\mc{E(H)}$.
Furthermore, $F\in\Delta(\mc{H}^c)$ for every $F\sse\mc{X(H)}$ with
$|F|<d$.
\end{proof}

\begin{de}
Let $\Delta$ be a simplicial complex on a finite set, $\mc{X}$, of
vertices. For any given $d\in\mathbb{N}$, the {\bf $d$-uniform
hypergraph}, $\mc{H}_d(\Delta)$, {\bf of} $\Delta$, is the
hypergraph with vertex set $\mc{X}$, and with edge set
\[
\mc{E}_d(\Delta)=\{F\in\Delta\,;\,|F|=d\}.
\]
\end{de}

\begin{prop}
Let $\mc{H}$ be a hypergraph and $\Delta$ an arbitrary $d$-flag
complex on $\mc{X(H)}$. Then,
\begin{itemize}
\item{$\mc{H}_d(\Delta_{\mc{H}})=\mc{H}$}
\item{$\Delta_{\mc{H}_d(\Delta)}=\Delta$}
\end{itemize}
\end{prop}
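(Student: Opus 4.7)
The plan is to verify both equalities by checking that the two complexes (resp. hypergraphs) share the same vertex set and the same edges (resp. faces), using only the two defining conditions: $F\in\Delta_{\mc{H}}$ iff ${F\choose d}\sse\mc{E(H)}$ (with the convention that every $F$ of size $<d$ is automatically in $\Delta_{\mc{H}}$), and $E\in\mc{E}_d(\Delta)$ iff $E\in\Delta$ with $|E|=d$.

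For the first equality $\mc{H}_d(\Delta_{\mc{H}})=\mc{H}$, both hypergraphs have vertex set $\mc{X(H)}$ by construction, so only the edge sets need to be compared. An edge of $\mc{H}_d(\Delta_{\mc{H}})$ is precisely a face $F$ of $\Delta_{\mc{H}}$ with $|F|=d$. But for such $F$, ${F\choose d}=\{F\}$, so the defining condition $F\in\Delta_{\mc{H}}$ collapses to $F\in\mc{E(H)}$. This gives equality of edge sets.

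For the second equality $\Delta_{\mc{H}_d(\Delta)}=\Delta$, the inclusion $\Delta\sse\Delta_{\mc{H}_d(\Delta)}$ is immediate: if $F\in\Delta$ then every $d$-subset of $F$ is also in $\Delta$, hence an edge of $\mc{H}_d(\Delta)$, so ${F\choose d}\sse\mc{E}_d(\Delta)$. For the reverse inclusion, let $F\in\Delta_{\mc{H}_d(\Delta)}$ and split into cases based on $|F|$. If $|F|<d$, then $F$ cannot contain any minimal non-face, since by the $d$-flag hypothesis every minimal non-face of $\Delta$ has cardinality exactly $d>|F|$; hence $F\in\Delta$. If $|F|\ge d$ and $F\notin\Delta$, then $F$ contains a minimal non-face $G$ of $\Delta$, which by the $d$-flag hypothesis satisfies $|G|=d$. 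But then $G\in{F\choose d}\sse\mc{E}_d(\Delta)$, contradicting that $G$ is a non-face of $\Delta$. Hence $F\in\Delta$.

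The only step that requires genuine content is the reverse inclusion in the second equality, and this is precisely where the $d$-flag hypothesis is used: without it, a minimal non-face of $\Delta$ could have cardinality different from $d$, and then the test ``${F\choose d}\sse\mc{E}_d(\Delta)$'' would fail to detect it. Everything else is a direct unfolding of Definitions 4.3 and 4.4 together with the convention about faces of size less than $d$.
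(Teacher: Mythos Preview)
Your proof is correct and follows the same approach as the paper, which simply states that the result follows directly from the definitions of $\Delta_{\mc{H}}$ and $\mc{H}_d(\Delta)$. You have supplied the details the paper omits, including the explicit use of the $d$-flag hypothesis for the reverse inclusion in the second equality; this is exactly where that hypothesis is needed, and your case analysis on $|F|$ is the right way to see it.
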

\begin{proof}
This follows directly from Definition 4.2 and Definition 4.3.
\end{proof}

\begin{de}
A hypergraph $\mc{H}$ is called {\bf triangulated} if for every non
empty subset $V\sse\mc{X(H)}$, either there exists a vertex $x\in V$
such that the induced hypergraph $\mc{H}_{N[x]\cap V}$ is isomorphic
to a $d$-complete hypergraph $K_n^d$, $n\ge d$, or else the edge set
of $\mc{H}_V$ is empty.
\end{de}

\begin{de}
A hypergraph $\mc{H}$ is called {\bf triangulated}* if for every non
empty subset $V\sse\mc{X(H)}$, either there exists a vertex $x\in V$
such that $N[x]\cap V$ is a facet of $(\Delta_{\mc{H}})_V$ of
dimension greater than or equal to $d-1$, or else the edge set of
$\mc{H}_V$ is empty.
\end{de}

\begin{de}
A {\bf chordal hypergraph} is a $d$-uniform hypergraph, obtained
inductively as follows:
\begin{itemize}
\item{$K_n^d$ is a chordal hypergraph, $n,d\in\mathbb{N}$.}
\item{If $\mc{G}$ is chordal, then so is
$\mc{H}$=$\mc{G}\bigcup_{K_j^d} K_i^d$, for $0\le j<i$. (This we
think of as glueing $K_i^d$ to $\mc{G}$ by identifying some edges,
or parts of some edges, of $K_i^d$ with the corresponding part,
$K_j^d$, of $\mc{G}$.)}
\end{itemize}
\end{de}

\begin{rem}
For $d=2$ this specializes precisely to the class of generalized
trees, i.e. generalized $n$-trees for some $n$, as defined in
\cite{F1}.
\end{rem}

\begin{rem}
In the special case of simple graphs, Definition 4.5 specializes
precisely to the ordinary chordal (rigid cicuit) graphs. Recall that
a simple graph is called chordal if every induced cycle of length
$>3$, has a chord. By considering minimal cycles, it is clear that a
graph that is triangulated according to Definition 4.5, is chordal.
Assume a graph $\mc{G}$ is chordal. It follows from Theorems 1 and 2
in \cite{Di}, that the chordal graphs are precisely the generalized
trees (see Remark 4.4). In a generalized tree we may easily find a
vertex $x$, with the property that $\mc{G}_{N[x]}$ is complete, as
follows: We know that $\mc{G}=\mc{G}'\cup_{K_j}K_i$, $0\le j< i$.
Then, we just pick a vertex $x\in\mc{X}(K_i)\ssm\mc{X(G')}$, since
such $x$ clearly has the property that $\mc{G}_{N[x]}$ is complete.
Since every induced subgraph of a chordal graph is chordal, the same
thing holds for every $\mc{G}_V$, $V\sse\mc{X(G)}$.
\end{rem}

Another characterization of chordal graphs may be found in
\cite{FG}. There it is shown that a simple graph is chordal
precisely when it has a perfect elimination order. Recall that a
perfect elimination order of a graph $\mc{G}=(\mc{X},\mc{E})$ is an
ordering of its vertices, $x_1<x_2<\cdots<x_n$, such that for each
$i$, $\mc{G}_{N[x_i]\cap\{x_{i},x_{i+1},\ldots,x_n\}}$ is a complete
graph. The concept of perfect elimination order is well suited for
generalizations. We make the following

\begin{de}
A hypergraph $\mc{H}$ is said to have a {\bf perfect elimination
order} if its vertices can be ordered $x_1<x_2<\cdots<x_n$, such
that for each $i$, either
$\mc{H}_{N[x_i]\cap\{x_{i},x_{i+1},\ldots,x_n\}}$ is isomorphic to a
$d$-complete hypergraph $K_n^d$, $n\ge d$, or else $x_i$ is isolated
in $\mc{H}_{\{x_{i},x_{i+1},\ldots,x_n\}}$
\end{de}

Note that this specializes precisely to the definition of perfect
elimination order for simple graphs if we put $d=2$.

\begin{lemma}
Let $\mc{H}$ be a hypergraph and $x\in V\sse\mc{X(H)}$ a vertex such 
that $\mc{H}_{N [x]}\cong K_m^d$, $m\ge d$. Then $\mc{H}_{N[x]\cap V}$ 
either is isomorphic to a $d$-complete hypergraph $K_{m'}^d$, $m'\ge d$, 
or else $x$ is isolated in $V$.
\end{lemma}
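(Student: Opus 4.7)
The plan is to split into two cases according to the size of $N[x]\cap V$, using the key fact that $\mc{H}_{N[x]}\cong K_m^d$ means \emph{every} $d$-subset of $N[x]$ is an edge of $\mc H$.

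First I would handle the case $|N[x]\cap V|\ge d$. Set $m'=|N[x]\cap V|$. Since $N[x]\cap V\sse N[x]$ and $\mc H_{N[x]}$ is $d$-complete, every $d$-subset of $N[x]\cap V$ is in fact an edge of $\mc H$. Moreover, each such $d$-subset lies entirely in $N[x]\cap V$, so it is an edge of the induced hypergraph $\mc H_{N[x]\cap V}$. This shows $\mc H_{N[x]\cap V}\cong K_{m'}^d$ with $m'\ge d$, giving the first alternative.

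Next I would handle the case $|N[x]\cap V|<d$ and show $x$ must be isolated in $\mc H_V$. Suppose, for contradiction, that some edge $E\in\mc E(\mc H_V)$ contains $x$. Then $E\sse V$ and $|E|=d$. Moreover every vertex $y\in E\ssm\{x\}$ is a neighbour of $x$, so $E\sse N[x]$. Hence $E\sse N[x]\cap V$, which forces $|N[x]\cap V|\ge|E|=d$, contradicting the case assumption. Therefore no edge of $\mc H_V$ contains $x$, i.e.\ $x$ is isolated in $\mc H_V$.

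There is essentially no genuine obstacle here; the statement is a direct bookkeeping exercise once one observes that in a $d$-uniform hypergraph, any edge through $x$ consists of $x$ together with $d-1$ vertices of $N(x)$, and therefore sits inside $N[x]$. The only point that requires minor care is to remember that $x\in N[x]\cap V$ (so this intersection is nonempty) and that $\mc H_{N[x]\cap V}$ automatically contains $x$ as a vertex, which is what allows the first case to produce a $d$-complete hypergraph on $m'\ge d$ vertices rather than a smaller one.
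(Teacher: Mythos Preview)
Your proof is correct and follows exactly the same approach as the paper: the paper's proof is simply the one-line case split ``either $|N[x]\cap V|\ge d$ or else $|N[x]\cap V|<d$,'' and you have supplied the routine details behind each case.
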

\begin{proof}
Either $|N[x]\cap V|\ge d$ or else $|N[x]\cap V|<d$.
\end{proof}
 
\begin{rem}
The above lemma in some sense explains what goes on in the 
proofs hereafter. It also casts some light on the last comment made 
in Remark 4.1.
\end{rem}

\begin{lemma}
If a hypergraph $\mc{H}$ with $\mc{E(H)}\neq\emptyset$ has a perfect
elimination order, then it has a perfect elimination order
$x_1<x_2<\cdots<x_n$ in which $x_1$ is not isolated.
\end{lemma}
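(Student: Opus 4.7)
The plan is to start with any perfect elimination order $x_1 < x_2 < \cdots < x_n$ of $\mc{H}$ and, if $x_1$ is isolated, produce a new order by moving the first ``active'' vertex to the front. Concretely, since $\mc{E(H)} \neq \emptyset$, let $i$ be the smallest index such that $x_i$ is not isolated in $\mc{H}_{\{x_i, x_{i+1}, \ldots, x_n\}}$; such $i$ exists because the smallest-indexed vertex appearing in any edge witnesses this property. I claim the reordering $y_1 = x_i,\ y_2 = x_1,\ y_3 = x_2,\ \ldots,\ y_i = x_{i-1},\ y_{i+1} = x_{i+1},\ \ldots,\ y_n = x_n$ is again a perfect elimination order.

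The key lemma to establish along the way is: for every $j < i$, the vertex $x_j$ lies in no edge of $\mc{H}$ whatsoever. The argument is a minimality contradiction: if $x_j$ belonged to some edge $E$, then letting $j'$ be the smallest index with $x_{j'} \in E$ gives $j' \le j < i$ and $E \subseteq \{x_{j'}, x_{j'+1}, \ldots, x_n\}$, so $x_{j'}$ would not be isolated in $\mc{H}_{\{x_{j'},\ldots,x_n\}}$, contradicting the minimality of $i$. A direct consequence is that $N(x_i) \subseteq \{x_{i+1}, \ldots, x_n\}$, and therefore $N[x_i] \cap \mc{X(H)} = N[x_i] = N[x_i] \cap \{x_i, x_{i+1}, \ldots, x_n\}$. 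Applying the original PEO property at step $i$, together with $x_i$ not being isolated in that tail, gives $\mc{H}_{N[x_i]} \cong K_m^d$ for some $m \ge d$.

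It then remains to verify the PEO condition for the new order at each index $k$. For $k=1$, the condition $\mc{H}_{N[y_1] \cap \{y_1,\ldots,y_n\}} \cong K_m^d$ is exactly what was just shown. For $2 \le k \le i$, the vertex $y_k = x_{k-1}$ lies in no edge of $\mc{H}$ by the claim above, hence is isolated in every induced subhypergraph, in particular in $\mc{H}_{\{y_k,\ldots,y_n\}}$. For $k > i$, we have $\{y_k,\ldots,y_n\} = \{x_k,\ldots,x_n\}$ and $y_k = x_k$, so the required condition is inherited verbatim from the original PEO at step $k$. Finally, $y_1 = x_i$ is not isolated in $\mc{H} = \mc{H}_{\{y_1,\ldots,y_n\}}$ by the choice of $i$ combined with $N[x_i] \subseteq \{x_i,\ldots,x_n\}$.

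The only genuinely substantive step is the claim that vertices preceding $x_i$ are entirely edge-free; everything else is bookkeeping. I expect this to be the main obstacle because it is what lets $N[x_i]$ coincide with $N[x_i]\cap\{x_i,\ldots,x_n\}$ and what guarantees the isolation conditions at the intermediate indices in the new order — without it, the naive move-to-front argument breaks down.
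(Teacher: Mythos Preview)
Your proof is correct and follows essentially the same approach as the paper's: both identify the first non-isolated vertex and reorder so that it comes first, the key observation in each case being that all earlier vertices are globally isolated in $\mc{H}$ (the paper takes this as the definition of $t$, whereas you define $i$ via isolation in the tail and then prove the global-isolation claim explicitly). The only cosmetic difference is that the paper uses the cyclic shift $x_t<\cdots<x_n<x_1<\cdots<x_{t-1}$ rather than your move-to-front order, but since $x_1,\ldots,x_{t-1}$ are isolated they may be placed anywhere, and the verification is the same.
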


\begin{proof}
Let $x_1<x_2<\cdots<x_n$ be a perfect elimination order of $\mc{H}$,
and put
\[
t=\min\{i\,;\,x_i\,\mathrm{is\,not\,isolated}\}.
\]
We claim that $x_t<\cdots<x_n<x_1<\cdots<x_{t-1}$ also is a perfect
elimination order of $\mc{H}$. Since $x_1,\ldots,x_{t-1}$ are
isolated, we need only verify that
$\mc{H}_{N[x_i]\cap\{x_i,x_{i+1},\ldots,x_n,x_1,\ldots,x_{t-1}\}}\cong
K_{m_i}^d$ for some $m_i\ge d$, $i=t,\ldots,n$. However, this is
clear since
$\mc{H}_{N[x_i]\cap\{x_i,x_{i+1},\ldots,x_n,x_1,\ldots,x_{t-1}\}}=
\mc{H}_{N[x_i]\cap\{x_i,x_{i+1},\ldots,x_n\}}$.

\end{proof}

\begin{lemma}
If a hypergraph $\mc{H}$ is triangulated (triangulated*, chordal),
or, has a perfect elimination order, then so does $\mc{H}_V$ for
every $V\sse\mc{X(H)}$.
\end{lemma}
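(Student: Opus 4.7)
The plan is to handle the four properties separately around a single pivot that neutralises the subtlety flagged in Remark 4.1 and Example 1. The pivot, which I would record once at the outset, is the following: \emph{if $W\sse V\sse\mc{X(H)}$ and $\mc{H}_{N[x]\cap W}\cong K_m^d$ with $m\ge d$, then the closed neighbourhood of $x$ inside $\mc{H}_V$, intersected with $W$, equals $N[x]\cap W$, and consequently $\mc{H}_V$ induces on this set the same $K_m^d$}. The nontrivial inclusion uses completeness on $\ge d$ vertices: for each $y\in N[x]\cap W$ with $y\ne x$, one can choose a $d$-edge through $x$ and $y$ entirely inside $N[x]\cap W\sse V$, so $y$ is a neighbour of $x$ already in $\mc{H}_V$. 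This is exactly the mechanism behind Lemma 4.1 and the reason the notation $N_V[\cdot]$ can be dispensed with in practice.

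With this in hand, the triangulated and triangulated$^\ast$ cases are almost mechanical. For any nonempty $W\sse V$ I apply the relevant property of $\mc{H}$ to $W\sse\mc{X(H)}$. The ``empty edge set'' alternative transfers verbatim because $(\mc{H}_V)_W=\mc{H}_W$; the nontrivial alternative yields a vertex $x\in W$ whose witnessing $d$-complete induced subhypergraph (respectively, facet of $(\Delta_{\mc{H}})_W$ of dimension $\ge d-1$) is upgraded to one for $\mc{H}_V$ by the pivot. For the starred version I also use that $(\Delta_{\mc{H}_V})_W=(\Delta_{\mc{H}})_W$, since both sides equal $\{F\sse W:{F\choose d}\sse\mc{E(H)}\}$ by Definition 4.3.

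The chordal case I would handle by induction on the inductive construction in Definition 4.5. The base case $\mc{H}=K_n^d$ is immediate because $(K_n^d)_V=K_{|V|}^d$ is chordal. For the inductive step $\mc{H}=\mc{G}\cup_{K_j^d}K_i^d$ with $0\le j<i$, I split $V=V_1\cup V_2$ with $V_1=V\cap\mc{X(G)}$ and $V_2=V\cap\mc{X}(K_i^d)$; the edges of $\mc{H}_V$ decompose along this split so that $\mc{H}_V=\mc{G}_{V_1}\cup_{K_{j'}^d}K_{|V_2|}^d$ with $j'=|V\cap\mc{X}(K_j^d)|\le|V_2|$. When $j'<|V_2|$ this is another allowed gluing, and chordality of $\mc{G}_{V_1}$ comes from induction; when $j'=|V_2|$ one checks that $V_2\sse\mc{X}(K_j^d)\sse\mc{X(G)}$, hence $V_2\sse V_1$ and $\mc{H}_V=\mc{G}_{V_1}$.

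For the perfect elimination order, I restrict the given order of $\mc{H}$ to $V$, writing $V=\{x_{i_1}<\cdots<x_{i_m}\}$, $W_k=\{x_{i_k},\ldots,x_{i_m}\}$ and $S_k=\{x_{i_k},\ldots,x_n\}$, so that $W_k=V\cap S_k$. Applying the PEO of $\mc{H}$ at $x_{i_k}$, either $x_{i_k}$ is isolated in $\mc{H}_{S_k}$, hence in $\mc{H}_{W_k}=(\mc{H}_V)_{W_k}$, or $\mc{H}_{N[x_{i_k}]\cap S_k}\cong K_m^d$ with $m\ge d$; in the latter case Lemma 4.1 applied inside $\mc{H}_{S_k}$ with $V'=W_k$ gives either isolation of $x_{i_k}$ in $\mc{H}_{W_k}$ or $\mc{H}_{N[x_{i_k}]\cap W_k}\cong K_{m'}^d$ with $m'\ge d$, and one final appeal to the pivot transports the $d$-completeness into $\mc{H}_V$. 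The only real obstacle is the neighbourhood discrepancy of Remark 4.1; the pivot absorbs it once and for all, after which each of the four properties is inherited by a short argument.
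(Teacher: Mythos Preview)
Your proof is correct and follows the paper's strategy in each of the four cases: the identity $(\mc{H}_V)_W=\mc{H}_W$ for triangulated and triangulated*, induction along the inductive construction for chordal, and restriction of the given order together with Lemma~4.1 for the perfect elimination order. Your explicit pivot $N_{\mc{H}_V}[x]\cap W=N[x]\cap W$ under the $K_m^d$ hypothesis is a welcome clarification of the neighbourhood subtlety from Remark~4.1; the paper handles the triangulated cases by simply declaring them ``clear'' from $(\mc{H}_V)_W=\mc{H}_W$, so your write-up is, if anything, more careful on that point.
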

\begin{proof}
Let $V\sse\mc{X(H)}$. If $\mc{E(H}_V)=\emptyset$, $\mc{H}_V$ clearly is 
triangulated and triangulated*. It is also chordal since
we can add one vertex at a time until we have the desired discrete
hypergraph, and any ordering of $V$ yields a perfect elimination
order. Thus we may assume that $\mc{E(H}_V)\neq\emptyset$.

The lemma is clear for the classes of triangulated and triangulated*
hypergraphs, since if $W\sse V$, we have that
$(\mc{H}_V)_W=\mc{H}_W$. Now, let $\mc{H}=\mc{G}\bigcup_{K_j^d}
K_i^d$, $0\le j<i$, be chordal. If $V\sse\mc{X(G)}$, or if 
$V\sse\mc{X}(K_i^d)$, we are done by induction. If this is not the 
case, it is easy to realize that 
$\mc{H}_V=\mc{G}_V\bigcup_{(K_j^d)_V}(K_i^d)_V$. Since $\mc{G}_V$ is
chordal by induction, the result follows.  Finally, assume $\mc{H}$
has a perfect elimination order $x_1<x_2<\cdots<x_n$. Then $V$ inherits 
an ordering $x_{i_1}<x_{i_2}<\cdots<x_{i_{|V|}}$. The fact that this is a
perfect elimination order of $\mc{H}_V$ follows from Lemma 4.1.

\end{proof}

\begin{thm}
Let $\mc{H}=(\mc{X(H)},\mc{E(H)})$ be a $d$-uniform hypergraph. Then
the following are equivalent.
\begin{itemize}
\item[$(i)$]{$\mc{H}$ is triangulated.}
\item[$(ii)$]{$\mc{H}$ is triangulated*.}
\item[$(iii)$]{$\mc{H}$ is chordal.}
\item[$(iv)$]{$\mc{H}$ has a perfect elimination order.}
\end{itemize}
\end{thm}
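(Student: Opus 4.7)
The plan is to establish $(i)\Leftrightarrow(ii)$ directly from the definitions and then close the remaining equivalences via the cycle $(i)\Rightarrow(iii)\Rightarrow(iv)\Rightarrow(i)$. Lemmas 4.1--4.3 will do the transitional work: Lemma 4.3 restricts each of the four properties to induced subhypergraphs, Lemma 4.2 rearranges a PEO so that the first vertex is not isolated, and Lemma 4.1 propagates $d$-completeness of a closed neighborhood under intersection with a subset.

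For $(i)\Leftrightarrow(ii)$ I would observe that $\mc{H}_{N[x]\cap V}\cong K_n^d$ with $n\ge d$ says exactly that $N[x]\cap V$ is a face of $\Delta_\mc{H}$ of dimension $\ge d-1$, and that any strictly larger face $F\sse V$ would contain some $y\notin N[x]$; but then a $d$-subset of $F$ through both $x$ and $y$ is an edge of $\mc{H}$, forcing $y\in N(x)$, a contradiction. Hence $N[x]\cap V$ is a facet of $(\Delta_\mc{H})_V$ of dimension $\ge d-1$, and the converse direction is immediate.

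For $(i)\Rightarrow(iii)$ I would induct on $|\mc{E(H)}|$. Applying triangulatedness at $V=\mc{X(H)}$ yields $x$ with $\mc{H}_{N[x]}\cong K_m^d$, $m\ge d$; since every edge through $x$ lies inside $N[x]$, the subhypergraph $\mc{H}'=\mc{H}_{\mc{X(H)}\ssm\{x\}}$ has strictly fewer edges, remains triangulated by Lemma 4.3, and is chordal by induction, whence $\mc{H}=\mc{H}'\bigcup_{K_{m-1}^d}K_m^d$ with $K_m^d$ placed on $N[x]$ and the overlap $K_{m-1}^d$ on $N[x]\ssm\{x\}$. For $(iii)\Rightarrow(iv)$ I would induct on the chordal construction: for $\mc{H}=\mc{G}\bigcup_{K_j^d}K_i^d$ with PEO $y_1<\cdots<y_m$ of $\mc{G}$ (from induction), list the new vertices of $K_i^d$ as $z_1,\ldots,z_{i-j}$ and propose $z_1<\cdots<z_{i-j}<y_1<\cdots<y_m$; each $z_l$ has $\mc{H}$-neighbors only in $\mc{X}(K_i^d)$, so its suffix condition is verified directly inside $K_i^d$, while each $y_t$ has suffix contained in $\mc{X(G)}$, so the $K_i^d$-part adds no neighbors in the suffix and the condition reduces to the PEO of $\mc{G}$.

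Finally, for $(iv)\Rightarrow(i)$, fix $V$ with $\mc{E(H_V)}\ne\emptyset$. Lemma 4.3 supplies a PEO of $\mc{H}_V$, and Lemma 4.2 rearranges it so that the first vertex $x$ is not isolated in $\mc{H}_V$; the PEO property at $x$ then yields a $d$-complete induced subhypergraph of size $\ge d$ containing $x$, and Lemma 4.1 propagates this $d$-completeness to give $\mc{H}_{N[x]\cap V}\cong K_{m'}^d$ with $m'\ge d$. This last step is the principal obstacle: as Remark 4.1 and Example 1 stress, $N[x]\cap V$ and the closed neighborhood of $x$ taken inside $\mc{H}_V$ genuinely differ for $d$-uniform hypergraphs with $d\ge 3$, so the vertex $x$ and the invocation of Lemma 4.1 must be chosen with care---this is precisely what motivates the three supporting lemmas.
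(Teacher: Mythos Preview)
Your $(iii)\Rightarrow(iv)$ step contains a genuine gap. The assertion that ``the $K_i^d$-part adds no neighbours in the suffix'' for the vertices $y_t$ of $\mc{G}$ fails when $2\le j<d\le i$: in that range two vertices of $\mc{X}(K_j^d)$ that were \emph{not} neighbours in $\mc{G}$ can become neighbours in $\mc{H}$ through an edge of $K_i^d$ that also passes through some $z_l$. Concretely, take $d=3$, let $\mc{G}$ be the disjoint union of $K_3^3$ on $\{a,b,c\}$ and $K_3^3$ on $\{d,e,f\}$ (with PEO $a<b<c<d<e<f$), and form $\mc{H}=\mc{G}\cup_{K_2^3}K_4^3$ by attaching $K_4^3$ on $\{a,d,z_1,z_2\}$ along the two isolated points $\{a,d\}$. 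Your proposed order is $z_1<z_2<a<b<c<d<e<f$. At $a$ the suffix is $\{a,b,c,d,e,f\}$, and now $d\in N_{\mc{H}}[a]$ via the edge $\{a,d,z_1\}$, so $N_{\mc{H}}[a]\cap\{a,\ldots,f\}=\{a,b,c,d\}$; but $\mc{H}_{\{a,b,c,d\}}$ carries only the single edge $\{a,b,c\}$, hence is not $K_4^3$, and $a$ is not isolated in the suffix. Thus the proposed order is not a PEO, and the implication as you argue it breaks down.

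The paper sidesteps this by not attempting $(iii)\Rightarrow(iv)$ directly: it proves $(iii)\Rightarrow(i)$ (picking a new vertex of $K_i^d$ when $i\ge d$, and falling back on the inductive hypothesis for $\mc{G}$ when $i<d$) and then $(i)\Rightarrow(iv)$ by peeling off a vertex whose \emph{full} closed neighbourhood in $\mc{H}$ is already $d$-complete. I would also flag that your invocation of Lemma~4.1 in $(iv)\Rightarrow(i)$ does not deliver what you need: Lemma~4.1 assumes $\mc{H}_{N[x]}$ (neighbourhood taken in $\mc{H}$) is $d$-complete, whereas from the PEO of $\mc{H}_V$ you only obtain that $(\mc{H}_V)_{N_V[x]}$ is $d$-complete; these differ precisely by the phenomenon in Remark~4.1, and the lemma does not bridge that gap. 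The paper instead invokes Lemma~4.3 at the outset to reduce every implication to the case $V=\mc{X(H)}$, where this discrepancy disappears.
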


\begin{proof}
Due to Lemma 4.3, we need only consider the full set $\mc{X(H)}$ of
vertices
in our arguments, and we may assume that $\mc{E(H)}\neq\emptyset$.\\

$(i)\Rightarrow (ii)$.\quad Since we assume $\mc{E(H)}\neq\emptyset$
and consider only the case where $V=\mc{X(H)}$, there is a vertex
$x$ such that $\mc{H}_{N[x]}\cong K_n^d$, $n\ge d$. Then, $N[x]$
clearly is a face in $\Delta_{\mc{H}}$ of dimension at least $d-1$.
Furthermore it has to be a facet, since if there were a $y\in
\mc{X(H)}$, $y\neq x$, such that $N[x]\cup\{y\}\in \Delta_{\mc{H}}$,
then there would exist an edge $E$ with
$x,y\in E$. Hence, $y\in N[x]$.\\

$(ii)\Rightarrow (i)$.\quad By assumption, there is a vertex $x$ such 
that $N[x]$ is a facet in $\Delta_{\mc{H}}$ of dimension greater 
than or equal to $d-1$, whence it is clear (from the definition of 
$\Delta_{\mc{H}})$ that $\mc{H}_{N[x]}\cong K_n^d$ for some $n\ge d$.\\

$(i)\Rightarrow (iii)$.\quad  By assumption there is a vertex
$x\in\mc{X(H)}$ such that $\mc{H}_{N[x]}\cong K_n^d$, for some $n\ge
d$. Let $\mc{G}$ be the induced hypergraph on $\mc{X(H)}\ssm\{x\}$.
Then $\mc{E(G)}$ consists of all edges of $\mc{H}$, except those
that contain $x$. This yields $\mc{H}=\mc{G}\cup_\mc{K}K_n^d$, where
$\mc{K}=K_{|N(x)|}^d$ on vertex set $N(x)$, and by induction we are
done.\\

$(iii)\Rightarrow (i)$.\quad Assume
$\mc{H}=\mc{G}\cup_{K_j^d}K_i^d$, $0\le j<i$, is chordal, where
$\mc{G}$ is chordal by construction. If $i\ge d$, any vertex
$x\in\mc{X}(K_i^d)\ssm\mc{X(G)}$ will do, since $\mc{H}_{N[x]}\cong
K_i^d$ for such $x$. If $i<d$, we find, by induction, a vertex
$x\in\mc{X(G)}$ with the property that
$\mc{H}_{N[x]}=\mc{G}_{N[x]}\cong K_n^d$ for some $n\ge d$, since
otherwise the edge set of $\mc{H}$ would be empty, contrary to our
assumptions.\\

$(i)\Rightarrow (iv)$.\quad By assumption we find a vertex $x=x_1$
such that $\mc{H}_{N[x_1]}\cong K_n^d$, $n\ge d$. Since the induced
hypergraph on $\mc{X(H)}\ssm\{x_1\}$ is triangulated, by induction
it has a perfect elimination order $x_2<\cdots<x_n$. If we put
$x_1<x_2$ we are done.\\

$(iv)\Rightarrow (i)$.\quad By Lemma 4.2 there is a perfect
elimination order $x_1<\cdots<x_n$, such that $\mc{H}_{N[x_1]\cap
V}\cong K_m^d$ for some $m\ge d$.

\end{proof}

\subsection{Some examples}
In \cite{E1}, we considered hypergraph generalizations of the well
known complete and complete multipartite graphs. We use these to
create some examples of
chordal hypergraphs.\\

Recall from \cite{E1} the definition of the $d$-complete bipartite
hypergraph $K_{n,m}^d$: This is the hypergraph on a vertex set that
is a disjoint union, $[n]\sqcup [m]$, of two finite sets. The edge
set consists of all sets $V\sse [n]\sqcup [m]$, $|V|=d$, such that 
$V\cap [n]\neq\emptyset\neq V\cap [m]$.\\

{\bf Example 2.}\quad Here we consider the complement
$\mc{H}=(K_{n,m}^d)^c$ of $K_{n,m}^d$. We claim that $\mc{H}$ is
chordal. It is easy to see, considering the relations in the
Stanley-Reisner ring, that $\Delta_{\mc{H}}$ looks like
\[
(\Delta_n\sqcup\Delta_m)\cup\Gamma_{d-2}([n]\cup [m])
\]
where $\Delta_r$ is the full simplex on $[r]$, and
$\Gamma_{d-2}([n]\cup [m])$ is the
$(d-2)$-skeleton of the full simplex on $[n]\sqcup [m]$.\\
Clearly, the $d$-uniform hypergraph of this complex, in other words
$\mc{H}$, is the disjoint union two $d$-complete hypergraphs,
\[
\mc{H}=K_n^d\cup_{K_0^d}K_m^d,
\]
so $\mc{H}$ is chordal.\\

{\bf Example 3.}\quad Now consider the complex $\Delta_{K_{n,m}^d}$,
of $K_{n,m}^d$. If $n,m<d$, we have an isomorphism $K_{n,m}^d\cong
K_{n+m}^d$, so in this case $K_{n,m}^d$ is chordal. If $n$ or $m$ is
greater than or equal to $d$, $K_{n,m}^d$ is not chordal. This is
because no matter which vertex $x$ we choose, the induced hypergraph
on $N[x]$ cannot be $d$-complete, since it would then contain
an edge lying entirely in either $[n]$ or $[m]$, which is impossible.\\
The general case of the $d$-complete multipartite hypergraph,
$K_{n_1,\ldots,n_t}^d$, is similar. $K_{n_1,\ldots,n_t}^d$ is
chordal only when $n_i<d$ for every $i=1,\ldots,t$.
The arguments are the same as in the bipartite case.\\

Another kind of complete hypergraph, is the $d(a,b)$-complete
hypergraph $\mc{H}=K_{n,m}^{d(a,b)}$, where $d=a+b$, $a,b\ge1$. Here
$\mc{X(H)}=[n]\sqcup [m]$,
and $\mc{E(H)}={[n]\choose a}\times {[m]\choose b}$.\\

{\bf Example 4.}\quad Consider the complex of $K_{n,m}^{d(a,b)}$.
Pick any vertex $x$ and consider $N[x]$. If the induced hypergraph
$(K_{n,m}^{d(a,b)})_{N[x]}$ is to be complete, both $n$ and $m$ must
be smaller than $d$, and at least one of the two equations $n=a$,
$m=b$ must hold. Otherwise we obtain a contradiction since
$K_{n,m}^{d(a,b)}$ would then contain an edge of the wrong shape. If
$n$ and $m$ satisfy these conditions, the hypergraph is chordal.

\section{Generalized chordal hypergraphs}
It is easy to find an example of a $d$-uniform hypergraph $\mc{H}$
that is not chordal, but such that the Stanley-Reisner ring of
$\Delta_{\mc{H}}$ has linear
resolution.\\

{\bf Example 5:}\, Let $\mc{H}$ be the 3-uniform hypergraph with
$\mc{X(H)}=\{a,b,c,d\}$, and $\mc{E(H)}=\big\{ \{a,b,c\}, \{a,c,d\},
\{a,b,d\}\big\}$. The following simple picture lets us visualize
$\mc{H}$.
\begin{displaymath}
\xymatrix{&a\ar@{-}[ddl]\ar@{-}[dd]\ar@{-}[dr]&\\
&&d\ar@{-}[dl]\\
b\ar@{-}[r]\ar@{.}[urr]&c&}
\end{displaymath}

$R/I_{\Delta_{\mc{H}}}$ has linear resolution, but $\mc{H}$ is not chordal.\\

If $\Delta$ is a simplicial complex on $[n]$ and $E$ is a finite
set, we denote by $\Delta\cup E$ the simplicial complex on $[n]\cup
E$ whose set of facets, $\mc{F}(\Delta\cup E)$, is
$\mc{F}(\Delta)\cup \{E\}$. Similarly, if $\mc{H}$ is a (not
necessarily $d$-uniform) hypergraph and $E$ a finite set, we denote
by $\mc{H}\cup E$ the hypergraph on $\mc{X(H)}\cup E$ whose edge set
is $\mc{E(H}\cup E)=\mc{E(H)}\cup \{E\}$.

\begin{de}
A {\bf generalized chordal hypergraph} is a $d$-uniform hypergraph,
obtained inductively as follows:
\begin{itemize}
\item{$K_n^d$ is a generalized chordal hypergraph, $n,d\in\mathbb{N}$.}
\item{If $\mc{G}$ is generalized chordal, then so is
$\mc{H}$=$\mc{G}\bigcup_{K_j^d} K_i^d$, for $0\le j<i$.}
\item{If $\mc{G}$ is generalized chordal and $E\sse\mc{X(G)}$ a finite set,
$|E|=d$, such that at least one element of ${E\choose {d-1}}$ is not
a subset of any edge of $\mc{G}$, then $\mc{G}\cup E$ is generalized
chordal.}
\end{itemize}
\end{de}

\begin{rem}
It is clear that every chordal hypergraph is also a generalized
chordal hypergraph. Furthermore, for $d=2$ chordal graphs and
generalized chordal graphs are the same.
\end{rem}

\begin{thm}
Let $\mc{H}=(\mc{X(H)},\mc{E(H)})$ be a generalized chordal
hypergraph and $k$ a field of arbitrary characteristic. Then the
Stanley-Reisner ring of $\Delta_{\mc{H}}$ has linear resolution.
\end{thm}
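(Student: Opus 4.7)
The plan is to invoke Hochster's formula (Theorem 3.1): $R/I_{\Delta_{\mc{H}}}$ has a linear resolution over $k$ if and only if $\tilde{H}_r((\Delta_{\mc{H}})_V;k)=0$ for every $V\sse\mc{X(H)}$ and every $r\neq d-2$. I will prove this vanishing by a nested induction: primarily on the construction of $\mc{H}$ as a generalized chordal hypergraph, secondarily on $|V|$. The base case $\mc{H}=K_n^d$ is immediate, since every induced subcomplex of $\Delta_{K_n^d}$ is a full simplex.

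For the glueing step $\mc{H}=\mc{G}\cup_{K_j^d}K_i^d$, set $V_K=V\cap\mc{X}(K_i^d)$. If $V\sse\mc{X(G)}$, then $(\Delta_{\mc{H}})_V=(\Delta_{\mc{G}})_V$ and the outer induction hypothesis applies. Otherwise, pick $y\in V_K\ssm\mc{X(G)}$ and apply Mayer--Vietoris to the decomposition $(\Delta_{\mc{H}})_V=\t{star}(y)\cup\t{del}(y)$. The star is contractible, and $\t{del}(y)=(\Delta_{\mc{H}})_{V\ssm\{y\}}$ is handled by the secondary induction, so everything reduces to understanding $\t{link}(y)$. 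A direct edge analysis, using that every edge of $\mc{H}$ containing $y$ must lie in $\mc{X}(K_i^d)$, shows that $\t{link}(y)$ is the union of the $(d-3)$-skeleton of $V\ssm\{y\}$ with the full simplex on $V_K\ssm\{y\}$.

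For the edge-addition step $\mc{H}=\mc{G}\cup E$, with $U\in\binom{E}{d-1}$ contained in no edge of $\mc{G}$: if $E\not\sse V$, then $(\Delta_{\mc{H}})_V=(\Delta_{\mc{G}})_V$. If $E\sse V$, every proper subset of $E$ is already in $(\Delta_{\mc{G}})_V$ (its cardinality being $<d$), so $(\Delta_{\mc{H}})_V$ is obtained from $A:=(\Delta_{\mc{G}})_V$ by attaching only the face $E$. Mayer--Vietoris with $B$ the full simplex on $E$ and $A\cap B=\partial B\cong S^{d-2}$, combined with the induction hypothesis for $\mc{G}$, leaves only $\tilde{H}_{d-1}((\Delta_{\mc{H}})_V)=0$ to verify, which is equivalent to injectivity of the inclusion-induced map $k=\tilde{H}_{d-2}(\partial B)\to\tilde{H}_{d-2}(A)$. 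The face $U$ witnesses this: since no edge of $\mc{G}$ extends $U$, the coefficient of the $(d-2)$-face $U$ in any $(d-1)$-boundary of $A$ is forced to vanish, whereas in the fundamental class of $\partial B$ it is $\pm1$; hence $[\partial B]\neq0$ in $\tilde{H}_{d-2}(A)$.

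The main obstacle will be verifying $\tilde{H}_{d-2}(\t{link}(y))=0$ in the glueing case. My plan is a second Mayer--Vietoris applied to the two pieces of $\t{link}(y)$, whose intersection is the $(d-3)$-skeleton of $V_K\ssm\{y\}$. A $(d-3)$-skeleton carries no $(d-2)$-chains, so every $(d-3)$-cycle in a sub-skeleton remains a nonzero cycle in the larger skeleton; the inclusion is therefore injective on $\tilde{H}_{d-3}$. Combined with contractibility of the full simplex on $V_K\ssm\{y\}$, this forces $\tilde{H}_{d-2}(\t{link}(y))=0$. Feeding this back into the outer Mayer--Vietoris sequence confines the homology of $(\Delta_{\mc{H}})_V$ to degree $d-2$, completing the induction.
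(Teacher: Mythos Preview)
Your proof is correct. The base case and the edge-addition step are essentially the paper's argument (your injectivity of $\tilde{H}_{d-2}(\partial B)\to\tilde{H}_{d-2}(A)$ via the ``free'' face $U$ is exactly the paper's observation that the coefficient of $E$ in any $(d-1)$-cycle must vanish).

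The glueing step, however, is handled differently. The paper does \emph{not} run a secondary induction on $|V|$ or use the star/link decomposition. Instead it introduces an auxiliary complex $\Delta'_{\mc{H}}=\Delta_{\mc{G}}\cup B$, where $B$ is the full simplex on $\mc{X}(K_i^d)$, notes that $\Delta_{\mc{G}}\cap B$ is the simplex $C$ on $\mc{X}(K_j^d)$, and applies a single Mayer--Vietoris to $(\Delta'_{\mc{H}})_V=(\Delta_{\mc{G}})_V\cup B_V$ with intersection $C_V$; since $B_V$ and $C_V$ are simplices and $(\Delta_{\mc{G}})_V$ is covered by the outer induction, $(\Delta'_{\mc{H}})_V$ has homology only in degree $d-2$. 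Finally the paper observes that $\Delta_{\mc{H}}=\Delta'_{\mc{H}}\cup\Gamma_{d-2}(\mc{X(H)})$ and that adjoining faces of dimension $\le d-2$ cannot create cycles in degrees $>d-2$. Your route trades this auxiliary complex for an explicit identification of $\mathrm{link}(y)$ as (skeleton)$\cup$(simplex) and a second Mayer--Vietoris to kill $\tilde H_{d-2}(\mathrm{link}(y))$; the injectivity argument ``no $(d-2)$-chains, hence no $(d-3)$-boundaries'' is the clean substitute for the paper's skeleton observation. The paper's decomposition is shorter (one Mayer--Vietoris plus a remark), while yours stays entirely inside $\Delta_{\mc{H}}$ and gives a uniform vertex-removal induction that some readers may find more transparent.
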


\begin{proof}
We consider the three instances of Definition 5.1 one at a time. If
$\mc{H}\cong K_n^d$ we are done, since if $n\ge d$ we have a simplex
so the situation is trivial, and if $n<d$ the claim is proved for
example in \cite{E1}, Theorem 3.1. So, we may assume
$\mc{H}\not\cong K_n^d$. Let $\mc{H}=\mc{G}\cup_{K_j^d}K_i^d$, $0\le
j<i$, where $\mc{G}$ is generalized chordal. Let $C$ and $B$ be the
simplices determined by $K_j^d$ and $K_i^d$, respectively, and
consider the complex $\Delta'_{\mc{H}}=\Delta_{\mc{G}}\bigcup B$.
Note that $B\cap\Delta_{\mc{G}}=C$, $B\neq C$. We first show that
$\Delta'_{\mc{H}}$ has linear resolution. For every
$V\sse\mc{X(H)}$, we have an exact sequence of chain complexes
\[
0\to\mc{C}.(C_V)\to\mc{C}.((\Delta_{\mc{G}})_V)\oplus\mc{C}.(B_V)\to
\mc{C}.((\Delta'_{\mc{H}})_V)\to0.
\]
By induction, via Hochster's formula, we know that
$(\Delta_{\mc{G}})_V$ can have non zero homology only in degree
$d-2$. But then, since both $B_V$ and $C_V$ are simplices and
accordingly have no homology at all, by considering the
Mayer-Vietoris sequence we conclude that the only possible non zero
homologies of $(\Delta'_{\mc{H}})_V$ lies
in degree $d-2$.\\
Note that it is not in general true that
$\Delta_{\mc{H}}=\Delta'_{\mc{H}}$. In fact, this holds only when
$d=2$. However, the difference between the two complexes is easy to
understand, and we may use the somewhat easier looking
$\Delta'_{\mc{H}}$ to show that $\Delta_{\mc{H}}$ has linear
resolution as well.

To this end, let $\Gamma_{d-2}(\mc{X(H)})$ be the $(d-2)$-skeleton
of the full simplex on vertex set $\mc{X(H)}$. Then one sees that
\[
\Delta_{\mc{H}}=\Delta'_{\mc{H}}\cup\Gamma_{d-2}(\mc{X(H)}).
\]
The $(d-2)$-faces that we add to $\Delta'_{\mc{H}}$ to obtain
$\Delta_{\mc{H}}$, can certainly not cause any homology in degrees
greater than $d-2$, that did not already exist in
$\Delta'_{\mc{H}}$. Indeed, suppose $\sum_i a_i\sigma_i$ is a cycle
in a degree $r>d-2$, where $a_i\in k$ and the $\sigma_i$'s are faces
of $\Delta_{\mc{H}}$, of dimension $r$. Since every face $\sigma_i$
actually lies in $\Delta'_{\mc{H}}$, it follows that $\sum_i
a_i\sigma_i$ is a cycle also in $\Delta'_{\mc{H}}$. Thus, if
$\Delta'_{\mc{H}}$ has linear resolution, so does $\Delta_{\mc{H}}$.

Finally, let $\mc{H}=\mc{G}\cup E$. Let $F_1,\ldots,F_t$ be the
elements of ${E\choose {d-1}}$ that are not subsets of any edge of
$\mc{G}$. Note that $\Delta_{\mc{H}}=\Delta_{\mc{G}}\cup E$. Take 
$V\sse\mc{X(H)}$. If $E\not\sse V$, then $(\Delta_{\mc{H}})_V=
(\Delta_{\mc{G}})_V$, so, by induction we conclude that the only 
possible non zero homologies of $(\Delta_{\mc{H}})_V$ lies in 
degree $d-2$. Hence we may assume that $E\sse V$. Then we have 
an exact sequence
\[
0\to\mc{C}.((\Delta_{\mc{G}}\cap
E)_V)\to\mc{C}.((\Delta_{\mc{G}})_V)\oplus\mc{C}.(E_V)
\to\mc{C}.((\Delta_{\mc{H}})_V)\to0.
\]

Note that $E_V$ is a simplex so it has no homology, and, by
induction, we know that $R/I_{\Delta_{\mc{G}}}$ has linear
resolution. Using Hochster's formula, we may conclude that
$\tilde{H}_{d-1}((\Delta_{\mc{G}})_V;k)=0$. Hence, the
Mayer-Vietoris sequence obtained from the above exact sequence looks
as follows:
\[
0\to\tilde{H}_{d-1}((\Delta_{\mc{H}})_V)\to
\tilde{H}_{d-2}((\Delta_{\mc{G}}\cap E)_V)\to
\tilde{H}_{d-2}((\Delta_{\mc{G}})_V)\to
\tilde{H}_{d-2}((\Delta_{\mc{H}})_V)\to0.
\]

Let $z=\sum_j a_j\sigma_j$ be an element in
$Z_{d-1}((\Delta_{\mc{H}})_V)$, where $\sigma_1=E$. Consider the
expression for the derivative of this cycle
\[
0=d(z)=\cdots+\sum_{i=1}^t\pm a_1F_i+\cdots.
\]

Since $\sum_{i=1}^t\pm a_1F_i$ only can come from $d(E)$, we
conclude that $a_1=0$. Hence $z\in Z_{d-1}((\Delta_{\mc{G}})_V)$,
and, using Hochster's formula, we may conclude that the
Stanley-Reisner ring of $\Delta_{\mc{H}}$ has linear resolution.

\end{proof}

\begin{cor}
Let $\mc{H}=(\mc{X(H)},\mc{E(H)})$ be a generalized chordal
hypergraph and $k$ a field of arbitrary characteristic. Then the
Stanley-Reisner ring $R/I_{\Delta_{\mc{H}}^*}$ of the Alexander dual
complex $\Delta_{\mc{H}}^*$ is Cohen-Macaulay.
\end{cor}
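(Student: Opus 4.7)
The plan is to deduce this immediately from Theorem 5.1 together with the Eagon-Reiner theorem (Theorem 2.1) and the involutive nature of Alexander duality.

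First, I would recall from the preliminaries that $(\Delta^\ast)^\ast = \Delta$ for every simplicial complex $\Delta$. Applying this to $\Delta_{\mc{H}}$ gives $(\Delta_{\mc{H}}^\ast)^\ast = \Delta_{\mc{H}}$.

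Next, I would invoke the Eagon-Reiner theorem with the complex $\Delta_{\mc{H}}^\ast$ in the role of ``$\Delta$'': it asserts that $R/I_{\Delta_{\mc{H}}^\ast}$ is Cohen-Macaulay if and only if $R/I_{(\Delta_{\mc{H}}^\ast)^\ast} = R/I_{\Delta_{\mc{H}}}$ has a linear minimal free resolution. Theorem 5.1, just proved, supplies exactly this linearity hypothesis, since $\mc{H}$ is generalized chordal by assumption.

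There is no real obstacle here; the whole content of the corollary is the combination of Theorem 5.1 with Eagon-Reiner, so the proof consists of citing both and observing that Alexander duality is an involution. The only thing to be slightly careful about is the direction in which Eagon-Reiner is applied, namely taking $\Delta_{\mc{H}}^\ast$ (not $\Delta_{\mc{H}}$) as the input complex, so that its Alexander dual is $\Delta_{\mc{H}}$ where linearity is known.
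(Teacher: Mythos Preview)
Your proposal is correct and follows exactly the paper's approach: the paper's proof is the single line ``This follows by the Eagon-Reiner theorem,'' and you have simply spelled out the application (including the involutivity of Alexander duality) in more detail.
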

\begin{proof}
This follows by the Eagon-Reiner theorem.
\end{proof}

\begin{cor}
Theorem 5.1 and Corollary 5.1 in particular applies to triangulated
and triangulated* hypergraphs, and also to hypergraphs that have
perfect elimination orders.
\end{cor}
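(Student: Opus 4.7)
The plan is to derive this corollary directly by chaining together the equivalence theorem from Section 4 with the observation that chordality implies generalized chordality. In other words, I want to reduce all four conditions mentioned in the corollary to the single condition under which Theorem 5.1 and Corollary 5.1 were proved, namely being a generalized chordal hypergraph.

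First I would invoke Theorem 4.1, which establishes that for a $d$-uniform hypergraph $\mc{H}$, the properties of being triangulated, triangulated*, chordal, and possessing a perfect elimination order are all equivalent. Hence to prove the corollary it suffices to treat the single case where $\mc{H}$ is chordal in the sense of Definition 4.5.

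Next I would appeal to Remark 5.1, which records that the inductive construction in Definition 4.5 is a special case of the inductive construction in Definition 5.1, so that every chordal hypergraph is automatically a generalized chordal hypergraph. Applying Theorem 5.1 then yields that $R/I_{\Delta_{\mc{H}}}$ has a linear resolution, and applying Corollary 5.1 yields that $R/I_{\Delta_{\mc{H}}^{\ast}}$ is Cohen--Macaulay. This finishes the argument.

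There is really no obstacle here: the whole content of the corollary is bookkeeping that identifies the four equivalent conditions as a subclass of the hypotheses of the earlier results. The only thing to be careful about is to cite Theorem 4.1 before Remark 5.1, since without the equivalence one would have to verify each of the four conditions separately against Definition 5.1, and only chordality is manifestly a subcase of generalized chordality.
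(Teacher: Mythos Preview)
Your argument is correct and matches the paper's intent: the paper gives no explicit proof for this corollary, treating it as immediate from Theorem 4.1 (the four conditions are equivalent, in particular each implies chordal) together with Remark 5.1 (chordal implies generalized chordal). Your write-up simply spells out these two steps, which is exactly the intended justification.
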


{\bf Question 1:}\quad Is there a hypergraph $\mc{H}$ such that the
Stanley-Reisner ring of $\Delta_{\mc{H}}$ has linear resolution over
any field $k$, but that is not a generalized chordal hypergraph?\\

{\bf Question 2:}\quad If $\mc{H}$ is a generalized chordal
hypergraph, are there more equivalent characterizations of $\mc{H}$
similar to those for a chordal hypergraph given in Theorem 4.1?

\bibliographystyle{plain}
\bibliography{ref}

\end{document}